\date{\today}
\newtheorem{theorem}{Theorem}[section]
\newtheorem{question}{Question}
\newtheorem{proposition}[theorem]{Proposition}
\newtheorem{corollary}[theorem]{Corollary}
\newtheorem{lemma}[theorem]{Lemma}
\theoremstyle{definition}
\newtheorem{example}[theorem]{Example}
\newtheorem{problem}[theorem]{Problem}
\newtheorem{remark}[theorem]{Remark}
\newcommand\w{\omega}
\newcommand{\N}{\mathbb N}
\def\N{\mathbb N}
\def\t_c{\tau_{comp}}
\begin{document}

\title[Closed subsets of compact-like topological spaces]{Closed subsets of compact-like topological spaces}

\author[S.~Bardyla]{Serhii~Bardyla}
\address{S.~Bardyla: Institute of Mathematics, Kurt G\"{o}del Research Center, Vienna, Austria}
 \email{sbardyla@yahoo.com}
 \thanks{The work of the first author is supported by the Austrian Science Fund FWF (Grant  I
3709 N35).}

\author[A.~Ravsky]{Alex~Ravsky}
\address{A.~Ravsky: Pidstryhach Institute for Applied Problems of Mechanics and Mathematics,
Nat. Acad. Sciences of Ukraine, Lviv, Ukraine}
\email{alexander.ravsky@uni-wuerzburg.de}

\keywords{H-closed space, countably compact space, semigroup of matrix units, bicyclic monoid}

\subjclass[2010]{Primary 54D30, 22A15}

\begin{abstract}
We investigate closed subsets (subsemigroups, resp.) of compact-like topological spaces
(semigroups, resp.). We prove that each Hausdorff topological space can be embedded as a closed
subspace into an H-closed topological space. However, the semigroup of $\w{\times\w}$-matrix units
cannot be embedded into a topological semigroup which is a weakly H-closed topological space. We show
that each Hausdorff topological space is a closed subspace of some $\omega$-bounded pracompact
topological space and describe open dense subspaces of countably pracompact topological spaces.
Also, we construct a pseudocompact topological semigroup which contains the bicyclic monoid as a
closed subsemigroup, providing a positive solution of a problem posed by Banakh, Dimitrova, and
Gutik.
\end{abstract}
\maketitle

\section{Preliminaries}
In this paper all topological spaces are assumed to be Hausdorff.
By $\omega$ we denote the first infinite cardinal. For ordinals $\alpha,\beta$ put $\alpha\leq
\beta$, ($\alpha< \beta$, resp.) iff $\alpha \subset \beta$ ($\alpha \subset \beta$ and
$\alpha\ne\beta$ , resp.). By $[\alpha,\beta]$ ($[\alpha,\beta)$, $(\alpha,\beta]$, $(\alpha,\beta)$, resp.) we denote the set of all
ordinals $\gamma$ such that $\alpha\leq \gamma\leq\beta$ ($\alpha\leq \gamma<\beta$, $\alpha< \gamma\leq\beta$, $\alpha<\gamma<\beta$, resp.). The
cardinality of a set $X$ is denoted by $|X|$.

For a subset $A$ of a topological space $X$ by
$\overline{A}$ we denote the closure of the set $A$ in $X$.

 A family $\mathcal{F}$ of subsets of a set $X$ is called a {\em filter} if it satisfies the following conditions:
\begin{itemize}
\item[$(1)$] $\emptyset\notin \mathcal{F}$;
\item[$(2)$] If $A\in \mathcal{F}$ and $A\subset B$ then $B\in \mathcal{F}$;
\item[$(3)$] If $A,B\in \mathcal{F}$ then $A\cap B\in\mathcal{F}$.
\end{itemize}
A family $\mathcal{B}$ is called a {\em base} of a filter $\mathcal{F}$ if for each element
$A\in\mathcal{F}$ there exists an element $B\in\mathcal{B}$ such that $B\subset A$. A filter on a
topological space $X$ is called an {\em $\w$-filter} if it has a countable base.  A filter
$\mathcal{F}$ is called {\em free} if $\bigcap\mathcal{F}=\varnothing$.
A filter
on a topological space $X$ is called {\em open} if it has a base which consists of open subsets. A
point $x$ is called an {\em accumulation point} ({\em $\theta$-accumulation point}, resp.) of a
filter $\mathcal{F}$ if for each open neighborhood $U$ of $x$ and for each $F\in\mathcal{F}$ the
set $U\cap F$ ($\overline{U}\cap F$, resp.) is non-empty. A topological space $X$ is said to be

\begin{itemize}
  \item \emph{compact}, if each filter has an accumulation point;
  \item \emph{sequentially compact}, if each sequence $\{x_n\}_{n\in\w}$ of points of $X$
  has a convergent subsequence;
  \item \emph{$\omega$-bounded}, if each countable subset of $X$ has compact closure;
  \item \emph{totally countably compact}, if each sequence of $X$ contains a subsequence with
  compact closure;
   \item \emph{countably compact}, if each infinite subset $A\subseteq X$  has  an  accumulation  point;
   \item \emph{$\omega$-bounded pracompact}, if there exists a dense subset $D$ of $X$  such that each countable subset of the set $D$ has compact closure in $X$;
  \item \emph{totally countably pracompact}, if there exists a dense subset $D$ of $X$  such that each sequence of points of the set $D$ has a subsequence with compact closure in $X$;
   \item \emph{countably pracompact}, if there exists a dense subset $D$ of $X$  such that every infinite subset $A\subseteq D$  has  an  accumulation  point in $X$;
   \item \emph{pseudocompact}, if $X$ is Tychonoff and each real-valued function on $X$ is bounded;
   \item \emph{H-closed}, if each filter on $X$ has a $\theta$-accumulation point;
  \item \emph{feebly $\omega$-bounded}, if for each sequence $\{U_n\}_{n\in\w}$  of non-empty  open  subsets of $X$ there is a compact subset $K$ of
  $X$ such that $K\cap U_n\ne\varnothing$ for each $n\in \w$;
\item \emph{totally feebly compact}, if for each sequence $\{U_n\}_{n\in\w}$
of non-empty  open  subsets of $X$ there is a compact subset $K$ of
$X$ such that $K\cap U_n\ne\varnothing$ for infinitely many $n\in \w$;
  \item \emph{selectively feebly compact},
if for each sequence $\{U_n\}_{n\in\w}$ of non-empty open subsets of $X$, for each $n\in\w$
we can choose a point $x_n\in U_n$
such that the sequence $\{x_n:n\in \w\}$ has an accumulation point.
  \item \emph{feebly compact}, if each open $\omega$-filter on $X$ has an accumulation point.
 \end{itemize}


The interplay between some of the above properties is shown in the diagram at page 3 in~\cite{Gutik-Ravsky}.
\begin{remark}
H-closed topological spaces has few different equivalent definitions. For a topological space $X$ the following conditions are equivalent:
\begin{itemize}
\item $X$ is H-closed;
\item if $X$ is a subspace of a Hausdorff topological space $Y$, then $X$ is closed in $Y$;
\item each open filter on $X$ has an accumulation point;
\item for each open cover $\mathcal{F}=\{F_{\alpha}\}_{\alpha\in A}$ of $X$ there exists a finite
subset $B\subset A$ such that $\cup_{\alpha\in B}\overline{F_\alpha}=X$.
  \end{itemize}
H-closed topological spaces in terms of $\theta$-accumulation points were investigated in~\cite{Dik,Jos1,Jos2,Os,Por1,Por2,Vel,Ver}.
Also recall that each H-closed space is feebly compact.
\end{remark}

In this paper we investigate closed subsets (subsemigroups, resp.) of compact-like topological spaces
(semigroups, resp.). We prove that each Hausdorff topological space can be embedded as a closed
subspace into an H-closed topological space. However, the semigroup of
$\w{\times\w}$-matrix units cannot be embedded into a topological semigroup which is a weakly H-closed
topological space. We show that each Hausdorff topological space is a closed subspace of some $\omega$-bounded pracompact topological space and describe open dense subspaces of countably pracompact topological spaces.  Also, we construct a pseudocompact topological semigroup which contains the
bicyclic monoid as a closed subsemigroup, providing a positive solution of Problem~\ref{problem}.

\section{Closed subspaces of compact-like topological spaces}

The productivity of compact-like properties is a known topic in general topology.
According to Tychonoff's theorem, a (Tychonoff) product of a family of compact spaces is compact,
On the other hand, there are two countably compact
spaces whose product is not feebly compact (see~\cite{Engelking-1989},
the paragraph before Theorem 3.10.16). The product of a countable family of sequentially
compact spaces is sequentially compact~\cite[Theorem~3.10.35]{Engelking-1989}.
But already the Cantor cube $D^\mathfrak{c}$ is not sequentially compact
(see~\cite{Engelking-1989}, the paragraph after Example 3.10.38).
On the other hand some compact-like properties are also preserved by products, see
~\cite[$\S$ 3-4]{VaughanHSTT} (especially Theorem 3.3, Proposition 3,4, Example 3.15, Theorem 4.7, and Example 4.15)
and $\S$7 for the history, ~\cite[$\S$ 5]{StephensonJr1984},
and~\cite[Sec. 2.3]{Gutik-Ravsky}.

\begin{proposition}\label{prop:fwb-prod} A product of any family of feebly $\omega$-bounded spaces is feebly $\omega$-bounded.
\end{proposition}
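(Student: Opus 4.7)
The plan is to reduce to basic open sets, then use the feeble $\omega$-boundedness coordinatewise and glue the coordinate-wise compact witnesses via Tychonoff's theorem.

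Let $X=\prod_{\alpha\in A}X_\alpha$ with each factor feebly $\omega$-bounded (we may assume all $X_\alpha\ne\varnothing$, as otherwise $X$ is empty and the claim is trivial), and let $\{U_n\}_{n\in\omega}$ be a sequence of non-empty open subsets of $X$. Inside each $U_n$ I first choose a non-empty basic open rectangle $V_n=\prod_\alpha V_n^\alpha$, where $V_n^\alpha=X_\alpha$ for all $\alpha$ outside some finite set $F_n\subset A$. It suffices to find a compact $K\subset X$ meeting every $V_n$. Let $S=\bigcup_{n\in\omega}F_n$, a countable subset of $A$.

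Now I use the hypothesis on each factor in $S$. For $\alpha\in S$ the sequence $\{V_n^\alpha\}_{n\in\omega}$ is a sequence of non-empty open subsets of $X_\alpha$ (the entries with $\alpha\notin F_n$ are simply $X_\alpha$), so by feeble $\omega$-boundedness of $X_\alpha$ there is a compact $K_\alpha\subseteq X_\alpha$ such that $K_\alpha\cap V_n^\alpha\ne\varnothing$ for every $n\in\omega$. For $\alpha\in A\setminus S$ I pick any $x_\alpha\in X_\alpha$ and set $K_\alpha=\{x_\alpha\}$. Then $K:=\prod_{\alpha\in A}K_\alpha$ is compact by Tychonoff's theorem.

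It remains to verify $K\cap V_n\ne\varnothing$ for every $n$. Fix $n$; for each $\alpha\in S$ pick $y_\alpha^{(n)}\in K_\alpha\cap V_n^\alpha$, and for each $\alpha\in A\setminus S$ note that $V_n^\alpha=X_\alpha$ because $F_n\subseteq S$, so $x_\alpha\in K_\alpha\cap V_n^\alpha$. Assembling these coordinates yields a point of $K\cap V_n\subseteq K\cap U_n$, which completes the argument.

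There is essentially no real obstacle: the only subtlety is to observe that the exceptional coordinates across all $n$ form a set $S$ that need not be countable in the classical product-compactness proof but, crucially, we do not need $S$ to be countable here — the compactness of $K$ comes from Tychonoff, and countability of $\omega$ is used only to invoke the feeble $\omega$-bounded property in each coordinate $\alpha\in S$, where the relevant sequence $\{V_n^\alpha\}_{n\in\omega}$ is automatically indexed by $\omega$.
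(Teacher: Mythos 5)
Your proof is correct and follows essentially the same route as the paper's: pass to basic open rectangles, apply feeble $\omega$-boundedness in each coordinate to get compact sets $K_\alpha$ meeting all the projected open sets, and take their Tychonoff product. The only cosmetic difference is that you single out the countable support set $S$ and use singletons elsewhere, whereas the paper applies the hypothesis uniformly to every coordinate (where the projection is all of $X_\alpha$ off the finite supports); both yield the same compact witness.
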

\begin{proof}
Let $X=\prod\{X_\alpha\colon \alpha\in A\}$ be a product of a family of feebly $\omega$-bounded
spaces and let $\{U_n\}_{n\in\omega}$ be a family of non-empty open subsets of the space $X$. For
each $n\in\omega$ let $V_n$ be a basic open set in $X$ which is contained in $U_n$. For each
$n\in\omega$ and $\alpha\in A$ let $V_{n,\alpha}=\pi_{\alpha}(V_n)$ where by $\pi_{\alpha}$ we
denote the projection on $X_{\alpha}$. For each $\alpha\in A$ there exists a compact subset
$K_\alpha$ of $X_\alpha$, intersecting each $V_{n,\alpha}$. Then the set $K=\prod\{K_\alpha\colon
\alpha\in A\}$ is a compact subset of $X$ intersecting each $V_n\subset U_n$.
\end{proof}


A non-productive compact-like properties still can be preserved by products with
more strong compact-like spaces. For instance, a product of a countably compact space
and a countably compact $k$-space or a sequentially compact space is countably compact,
and a product of a pseudocompact space and a pseudocompact $k$-space
or a sequentially compact Tychonoff space is pseudocompact (see \cite[Sec. 3.10]{Engelking-1989}).

\begin{proposition}\label{prop:cp-prod} A product $X\times Y$ of a countably pracompact space $X$ and a
totally countably pracompact space $Y$ is countably pracompact.
\end{proposition}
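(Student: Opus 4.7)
Let $D_X\subseteq X$ and $D_Y\subseteq Y$ be the dense witnesses of countable pracompactness and total countable pracompactness, respectively; I propose $D:=D_X\times D_Y$ as the dense witness of countable pracompactness of $X\times Y$. It is dense by construction, and I must supply an accumulation point in $X\times Y$ for every infinite $A\subseteq D$. To this end, I extract a sequence $\{(x_n,y_n)\}_{n\in\w}\subseteq A$ of pairwise distinct points, and apply the total countable pracompactness of $Y$ to the sequence $(y_n)\subseteq D_Y$ to pass to a subsequence $(y_{n_k})$ whose closure $K:=\overline{\{y_{n_k}:k\in\w\}}$ is compact in $Y$.

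Now I split according to whether the first-coordinate set $S:=\{x_{n_k}:k\in\w\}\subseteq D_X$ is finite or infinite. In the finite case, pigeonhole yields a further subsequence with constant first coordinate $x^*$; distinctness of the pairs forces the corresponding $y_{n_k}$ to be pairwise distinct, so they form an infinite subset of the compact space $K$ and hence have an accumulation point $y^*\in K$, and $(x^*,y^*)$ is an accumulation point of $A$. In the infinite case, countable pracompactness of $X$ applied to $S\subseteq D_X$ produces an accumulation point $x^*\in X$. For each open neighborhood $U\ni x^*$ set $F_U:=\{y_{n_k}:x_{n_k}\in U\}$; this is a nonempty subset of $K$ because every such $U$ meets $S$ in infinitely many points. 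Since $F_{U_1\cap U_2}\subseteq F_{U_1}\cap F_{U_2}$, the family $\{F_U\}$ is a filter base, so by compactness of $K$ the intersection $\bigcap_U \overline{F_U}$ is nonempty; any element $y^*$ of this intersection has the property that every neighborhood of $y^*$ meets every $F_U$, and I claim $(x^*,y^*)$ is then an accumulation point of $A$.

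The only genuinely technical step is the verification of this last claim, namely that each basic open neighborhood $U\times V$ of $(x^*,y^*)$ contains \emph{infinitely many} of the pairs $(x_{n_k},y_{n_k})$. The accumulation property of $y^*$ with respect to the filter base supplies at least one such pair $(x_{n_k},y_{n_k})\in U\times V$; distinctness of the pairs together with Hausdorffness (which lets one separate $(x^*,y^*)$ from any finite subcollection of already-found pairs by shrinking $U\times V$) then upgrades ``one such pair'' to ``infinitely many'' in the standard way. Everything else is a direct combination of the definitions with the compactness of $K$, and no further ingredients appear to be needed.
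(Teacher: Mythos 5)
Your proof is correct and follows essentially the same route as the paper's: take $D_X\times D_Y$ as the dense witness, pass to a subsequence whose second coordinates have compact closure $K$, take an accumulation point $x^*$ of the first coordinates, and extract $y^*$ from the centered family $\{\overline{F_U}\}$ of closed subsets of $K$. In fact you are slightly more careful than the paper, which tacitly assumes the sequence $\{x_n\}$ has an accumulation point without splitting off the case where its range is finite, and which leaves the final verification that $(x^*,y^*)$ accumulates the sequence to the reader.
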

\begin{proof}
Let $D$ be a dense subset of $X$ such that each infinite subset of $D$ has an accumulation
point in $X$ and $F$ be a dense subset of $Y$ such that
each sequence of points of the set $F$ has a subsequence
contained in a compact set.
Then $D{\times} F$ is a dense subset of $X{\times} Y$. So to prove that the space
$X{\times} Y$ is countably pracompact it suffices to show that each sequence
$\{(x_n,y_n)\}_{n\in\omega}$ of points of $D{\times} F$ has an accumulation point.
Taking a subsequence, if needed, we can assume that a
sequence $\{y_n\}_{n\in\omega}$ is contained in a compact set $K$.
Let $x\in X$ be an accumulation point of a sequence $\{x_n\}_{n\in \omega}$
and $\mathcal B(x)$ be the family of neighborhoods of the point $x$.
For each $U\in \mathcal B(x)$ put $Y_U=\overline{\{y_n\mid x_n\in U\}}$.
Then $\{Y_U \mid U\in \mathcal B(x)\}$ is a centered family of closed subsets of
a compact space $K$, so there exists a point $y\in\bigcap \{Y_U\mid U\in \mathcal B(x)\}$.
Clearly, $(x,y)$ is an accumulation point of the sequence
$\{(x_n,y_n)\}_{n\in\omega}$.
\end{proof}

\begin{proposition}\label{prop:sfc-prod} A product $X\times Y$ of a selectively feebly compact space $X$ and a
totally feebly compact space $Y$ is selectively feebly compact.
\end{proposition}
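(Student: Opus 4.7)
The plan is to mimic the centered-family argument of Proposition~\ref{prop:cp-prod}, with the two hypotheses playing complementary roles: totally feebly compact $Y$ supplies a single compact set $K\subseteq Y$ that meets infinitely many of the $Y$-sides of our basic rectangles, while selectively feebly compact $X$ lets us pick good points in the corresponding $X$-sides.

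Concretely, given a sequence $\{W_n\}_{n\in\omega}$ of non-empty open subsets of $X\times Y$, I would first shrink each $W_n$ to a basic rectangle $U_n\times V_n\subseteq W_n$ with $U_n,V_n$ non-empty open. Applying total feeble compactness of $Y$ to $\{V_n\}_{n\in\omega}$ yields a compact set $K\subseteq Y$ and an infinite $M\subseteq\omega$ with $K\cap V_n\neq\varnothing$ for every $n\in M$. I would then enumerate $\{U_n\}_{n\in M}$ as a sequence indexed by $\omega$ and invoke selective feeble compactness of $X$ to choose, for each $n\in M$, a point $x_n\in U_n$ so that $\{x_n:n\in M\}$ has an accumulation point $x\in X$. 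For $n\in M$ I pick $y_n\in V_n\cap K$, while for $n\in\omega\setminus M$ I pick arbitrary $x_n\in U_n$ and $y_n\in V_n$, so that $(x_n,y_n)\in W_n$ for every $n$.

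To finish, exactly as in the proof of Proposition~\ref{prop:cp-prod}, I let $\mathcal B(x)$ be the family of open neighborhoods of $x$ and put $Y_U=\overline{\{y_n:n\in M,\ x_n\in U\}}$ for $U\in\mathcal B(x)$. Each $Y_U$ is a non-empty closed subset of the compact space $K$, and $Y_{U_1\cap U_2}\subseteq Y_{U_1}\cap Y_{U_2}$, so the family $\{Y_U:U\in\mathcal B(x)\}$ is centered; any $y\in\bigcap_{U\in\mathcal B(x)}Y_U$ then makes $(x,y)$ an accumulation point of $\{(x_n,y_n):n\in M\}$, and hence of the full sequence $\{(x_n,y_n):n\in\omega\}$. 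The main organizational subtlety is that selective feeble compactness of $X$ must be applied to the subsequence indexed by $M$ rather than to the original $\{U_n\}_{n\in\omega}$, because only on $M$ do we have $y_n\in K$; conversely, arbitrary choices off $M$ cause no harm, since any accumulation point of a subsequence is automatically an accumulation point of the whole sequence.
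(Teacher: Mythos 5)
Your proposal is correct and follows essentially the same route as the paper's proof: shrink to basic rectangles, use total feeble compactness of $Y$ to get one compact $K$ meeting infinitely many $Y$-factors, apply selective feeble compactness of $X$ along that subsequence, and conclude with the same centered family $\{\overline{\{y_n\mid x_n\in U\}}\}_{U\in\mathcal B(x)}$ inside $K$. Your explicit handling of the indices outside $M$ (arbitrary choices there, plus the observation that an accumulation point of a subsequence is one of the whole sequence) just spells out what the paper compresses into ``taking a subsequence, if needed.''
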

\begin{proof}
Let $\{U_n\}_{n\in\omega}$ be a sequence of open subsets of $X{\times}Y$. For each $n\in\omega$
pick a non-empty open subsets $U^1_n$ of $X$ and $U^2_n$ of $Y$ such that
$U^1_n{\times}U^2_n\subset U_n$. Taking a subsequence, if needed, we can assume that that there
exists a compact subset $K$ of the space $Y$ intersecting each set $U^2_n$, $n\in\omega$. Since $X$ is
selectively feebly compact, for each $n\in\w$ we can choose a point $x_n\in U^1_n$  such that a
sequence $\{x_n\}_{n\in\omega}$ has an accumulation point $x\in X$.
For each $n\in\omega$ pick a point
$y_n\in U^2_n\cap K$. Then $(x_n,y_n)\in U^1_n{\times}U^2_n\subset U_n$.
Let $\mathcal B(x)$ be the family of neighborhoods of the point $x$.
For each $U\in \mathcal B(x)$ put $Y_U=\overline{\{y_n \mid x_n\in U\}}$.
Then $\{Y_U \mid U\in \mathcal B(x)\}$ is a centered family of closed subsets of a compact space $K$, so
there exists a point $y\in\bigcap \{Y_U \mid U\in \mathcal B(x)\}$. Clearly, $(x,y)$ is an accumulation point
of the sequence $\{(x_n,y_n)\}_{n\in\omega}$.
\end{proof}




An \emph{extension} of a space $X$ is a space $Y$ containing $X$ as a dense
subspace. Hausdorff extensions of topological spaces were investigated in~\cite{Dik1,Moo,Por3,Por4,Por5}. A class $\mathcal C$ of spaces is called \emph{extension closed} provided
each extension of each space of $\mathcal C$ belongs to $\mathcal C$.
If $Y$ is a space, a class $\mathcal C$ of spaces is
$Y$-\emph{productive} provided $X{\times}Y\in \mathcal C$ for each space
$X\in\mathcal C$.
It is well-known or easy to check
that each of the following classes of spaces is extension closed:
countably pracompact, $\omega$-bounded pracompact, totally countably pracompact,
feebly compact, 
selectively feebly compact,
and feebly $\omega$-bounded.
Since $[0,\omega_1)$ endowed with
the order topology is $\omega$-bounded and
sequentially compact, each of these classes is $[0,\omega_1)$-productive by
Proposition ~\ref{prop:cp-prod}, \cite[Proposition 2.4]{Gutik-Ravsky},
\cite[Proposition 2.2]{Gutik-Ravsky}, \cite[Lemma 4.2]{dow},
Proposition~\ref{prop:sfc-prod}, and Proposition ~\ref{prop:fwb-prod},
respectively.






Next we introduce a construction which helps us to construct a pseudocompact topological semigroup which contains the bicyclic monoid as a closed subsemigroup providing a positive answer to Problem~\ref{problem}.

Let $X$ and $Y$ be topological spaces such that there exists a continuous injection $f:X\rightarrow Y$. Then by $E_Y^f(X)$ we denote the
subset $[0,\omega_1]{\times} Y\setminus\{(\omega_1,y)\mid y\in Y\setminus f(X)\}$
of a product $[0,\omega_1]{\times} Y$ endowed with a topology $\tau$ which is defined as follows.
A subset $U\subset E_Y(X)$ is open if it satisfies the following conditions:
\begin{itemize}
\item for each $\alpha<\omega_1$, if $(\alpha,y)\in U$ then there exist $\beta<\alpha$ and an open neighborhood $V_y$ of $y$ in $Y$ such that $(\beta,\alpha]{\times}V_y\subset U$;
\item if $(\omega_1,f(x))\in U$ then there exist $\alpha<\omega_1$, an open neighborhood $V_{f(x)}$ of $f(x)$ in $Y$ and an open neighborhood $W_{x}$ of $x$ in $X$, such that $f(W_x)\subset V_{f(x)}$ and $(\alpha,\omega_1){\times}V_{f(x)}\cup \{\omega_1\}{\times}f(W_x)\subset U$.
\end{itemize}

Remark that $\{\omega_1\}{\times}f(X)$ is a closed subset of $E^f_Y(X)$ homeomorphic to $X$.

\begin{proposition}\label{th0} Let $X$ be a topological space which admits a continuous injection $f$ into a space $Y$ and
$\mathcal C$ be any extension closed, $[0,\omega_1)$-productive class of spaces. If $Y\in\mathcal C$ then $E_Y^f(X)\in\mathcal C$.
\end{proposition}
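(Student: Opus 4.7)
The plan is to recognise $E_Y^f(X)$ as an extension (in the paper's sense) of the product space $[0,\omega_1) \times Y$, and then to chain together the two assumed closure properties of the class $\mathcal C$. More precisely, I would show that $[0,\omega_1)\times Y$ is a dense subspace of $E_Y^f(X)$ carrying exactly the product of the order topology on $[0,\omega_1)$ and the topology on $Y$; the product lies in $\mathcal C$ by $[0,\omega_1)$-productivity, and the extension lifts to $\mathcal C$ by extension closedness.

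The first step is to verify that the inclusion $[0,\omega_1) \times Y \hookrightarrow E_Y^f(X)$ is a topological embedding, with subspace topology equal to the product topology. Here I would simply read off the definition of $\tau$: the second bullet restricts only points of the ``top level'' $\{\omega_1\}\times f(X)$ and is therefore vacuous on subsets contained in $[0,\omega_1)\times Y$, while the first bullet describes precisely the basic product neighborhoods $(\beta,\alpha]\times V_y$ of a point $(\alpha,y)$ with $\alpha<\omega_1$.

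The second step is density. The complement of $[0,\omega_1)\times Y$ in $E_Y^f(X)$ is $\{\omega_1\}\times f(X)$, and any basic $\tau$-neighborhood of a point $(\omega_1, f(x))$ contains a non-empty set of the form $(\alpha,\omega_1)\times V_{f(x)}$, which sits inside $[0,\omega_1)\times Y$. Combining the two steps, $E_Y^f(X)$ is an extension of $[0,\omega_1)\times Y$. Since $Y\in\mathcal C$ and $\mathcal C$ is $[0,\omega_1)$-productive, we have $[0,\omega_1)\times Y\in\mathcal C$, and extension-closedness of $\mathcal C$ then delivers $E_Y^f(X)\in\mathcal C$.

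I expect the entire proof to be little more than bookkeeping; the only place where one has to look carefully is the identification of the restricted topology in the first step. In particular, the subtler features of the construction — that $f$ need only be a continuous injection and not an embedding, and that $\{\omega_1\}\times f(X)$ carries the pulled-back topology homeomorphic to $X$ rather than the subspace topology from $Y$ — play no role in this proposition. Those features presumably become relevant only for later theorems, where $X$ is to be realised as a closed subspace of $E_Y^f(X)$.
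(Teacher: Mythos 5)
Your proposal is correct and follows exactly the paper's own argument: $E_Y^f(X)$ is an extension of $[0,\omega_1)\times Y$, which lies in $\mathcal C$ by $[0,\omega_1)$-productivity, and extension-closedness finishes the proof. The paper states this in three lines without spelling out the embedding and density checks, which you verify explicitly and correctly.
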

\begin{proof}
Let $Y\in\mathcal C$. Since $\mathcal C$ is $[0,\omega_1)$-productive, $[0,\omega_1)\times Y\in\mathcal C$.
A space $E_Y^f(X)$ is an extension of the space $[0,\omega_1)\times Y\in\mathcal C$ providing that
$E_Y^f(X)\in\mathcal C$.
\end{proof}

If a space $X$ is a subspace of a topological space $Y$ and $id$ is the identity embedding of $X$ into $Y$, then
by $E_Y(X)$ we denote the space $E_Y^{id}(X)$. It is easy to see that $E_Y(X)$ is a subspace of a product $[0,\omega_1]{\times}Y$ which implies that if $Y$ is Tychonoff then so is $E_Y(X)$.

\begin{proposition}\label{th} Let $X$ be a subspace of a pseudocompact space $Y$. Then $E_Y(X)$ is pseudocompact and contains a closed copy of $X$.
\end{proposition}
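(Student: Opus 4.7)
The plan is to run the extension-plus-product machinery of Proposition~\ref{th0} with pseudocompactness playing the role of the class $\mathcal C$. Pseudocompactness is not among the classes explicitly listed as extension-closed and $[0,\omega_1)$-productive, but it behaves analogously inside the Tychonoff category, and the paper already notes that $E_Y(X)$ is a subspace of $[0,\omega_1]\times Y$, hence Tychonoff whenever $Y$ is.

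First I would establish pseudocompactness of the dense open subspace $[0,\omega_1)\times Y\subset E_Y(X)$. Since $[0,\omega_1)$ is sequentially compact and Tychonoff, this follows from the classical product theorem (recalled in the paper just before Proposition~\ref{prop:cp-prod}) that the product of a pseudocompact space with a sequentially compact Tychonoff space is pseudocompact. Density of $[0,\omega_1)\times Y$ in $E_Y(X)$ is immediate from the definition of $\tau$, because every basic neighborhood of a point $(\omega_1,x)$ contains the nonempty set $(\alpha,\omega_1)\times V_x$, which lies in $[0,\omega_1)\times Y$.

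To conclude, I would invoke the general fact that any Tychonoff space $Z$ containing a dense pseudocompact subspace $D$ is itself pseudocompact: for any continuous $g:Z\to\R$ the restriction $g|_D$ is bounded, so by continuity and density $g(Z)\subset\overline{g(D)}$ is also bounded. Applying this with $Z=E_Y(X)$ and $D=[0,\omega_1)\times Y$ yields pseudocompactness of $E_Y(X)$. The closed copy of $X$ is the set $\{\omega_1\}\times X$ highlighted in the remark after the construction: its complement in $E_Y(X)$ equals the open set $[0,\omega_1)\times Y$, and the map $x\mapsto(\omega_1,x)$ is a homeomorphism onto $\{\omega_1\}\times X$ by direct inspection of the $\tau$-neighborhood base at $(\omega_1,x)$, where the trace of a basic neighborhood on $\{\omega_1\}\times X$ is exactly $\{\omega_1\}\times W_x$ for $W_x$ an arbitrary open neighborhood of $x$ in $X$. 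No step looks problematic; the only thing to track carefully is the correct citation of the product theorem and the Tychonoff status of the ambient product.
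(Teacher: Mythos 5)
Your proposal is correct and follows essentially the same route as the paper: both establish that the dense subspace $[0,\omega_1)\times Y$ of the Tychonoff space $E_Y(X)$ is pseudocompact (via the product theorem with the sequentially compact factor $[0,\omega_1)$) and then conclude that $E_Y(X)$ is pseudocompact because every continuous real-valued function is already bounded on this dense subspace, with $\{\omega_1\}\times X$ providing the closed copy of $X$. The only difference is cosmetic: the paper phrases the final step via $f^{-1}[a,b]$ being a closed set containing a dense subset, while you argue directly that $g(Z)\subset\overline{g(D)}$.
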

\begin{proof}
The above arguments imply that $E_Y(X)$ is Tychonoff. Fix any continuous real valued function $f$ on $E_Y(X)$.
Observe that the dense subspace $[0,\omega_1)\times Y$ of $E_Y(X)$ is pseudocompact. Then the restriction of $f$ on the subset $[0,\omega_1)\times Y$ is bounded, i.e., there exist reals $a,b$ such that $f([0,\omega_1)\times Y)\subset [a,b]$. Then $f^{-1}[a,b]$ is closed in $E_Y(X)$ and contains the dense subset $[0,\omega_1)\times Y$ witnessing that $f^{-1}[a,b]=E_Y(X)$.
Hence the space $E_Y(X)$ is pseudocompact.
\end{proof}

Embeddings into countable compact and $\omega$-bounded topological spaces were investigated in~\cite{BBR1,BBR2}.

A family $\mathcal{A}$ of countable subsets of a set $X$ is called {\em almost disjoint} if for
each $A,B\in \mathcal{A}$ the set $A\cap B$ is finite. Given a property $P$, the almost disjoint
family $\mathcal{A}$ is called {\em $P$-maximal} if each element of $\mathcal{A}$ has the property
$P$ and for each countable subset $F\subset X$ which has the property $P$ there exists
$A\in\mathcal{A}$ such that the set $A\cap F$ is infinite.

Let $\mathcal{F}$ be a family of closed subsets of a topological space $X$. The topological space $X$ is called
\begin{itemize}
\item {\em $\mathcal{F}$-regular}, if for any set $F\in\mathcal{F}$ and point $x\in X\setminus F$ there exist disjoint open sets $U,V\subset X$ such that $F\subset U$ and $x\in V$;
\item {\em $\mathcal{F}$-normal}, if for any disjoint sets $A,B\in\mathcal{F}$ there exist disjoint open sets $U,V\subset X$ such that $A\subset U$ and $B\subset V$.
\end{itemize}

Given a topological space $X$, by $\mathcal{D}_{\omega}$ we denote the family of countable closed
discrete subsets of $X$. We say that a subset $A$ of $X$ satisfies a property
$\mathcal{D}_{\omega}$ iff $A\in \mathcal{D}_{\omega}$.

\begin{theorem}\label{thc}
Each $\mathcal{D}_{\omega}$-regular topological space $X$ can be embedded as an open dense subset into a countably pracompact topological space.
\end{theorem}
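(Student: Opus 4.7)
The plan is a Mrówka/Isbell-style construction: adjoin one new point $p_A$ for each element $A$ of a $\mathcal{D}_\omega$-maximal almost disjoint family $\mathcal{A}$ of closed discrete countable subsets of $X$, arranging that $p_A$ becomes an accumulation point of $A$. Such a family $\mathcal{A}$ is produced by Zorn's lemma on the collection of almost disjoint families of closed discrete countable subsets of $X$; the resulting maximal family is automatically $\mathcal{D}_\omega$-maximal, since any closed discrete countable set meeting every $A \in \mathcal{A}$ in only a finite set could otherwise be adjoined.

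Set $Y := X \cup \{p_A : A \in \mathcal{A}\}$, and call $U \subseteq Y$ open iff $U \cap X$ is open in $X$ and, for every $A \in \mathcal{A}$ with $p_A \in U$, the set $A \setminus U$ is finite. Routine checks confirm that this yields a topology, that the induced subspace topology on $X$ coincides with the original one, and that $X$ is both open and dense in $Y$ (each basic neighborhood of $p_A$ contains a cofinite subset of $A$). To witness countable pracompactness, take $D := X$: if an infinite $B \subseteq X$ has no accumulation point in $X$ then it is automatically closed discrete in $X$, so any countably infinite $C \subseteq B$ is closed discrete and countable; by $\mathcal{D}_\omega$-maximality some $A \in \mathcal{A}$ satisfies $|A \cap C| = \omega$, and then every basic neighborhood $\{p_A\} \cup O$ of $p_A$ contains a cofinite subset of $A$ and hence meets $C \subseteq B$ in an infinite set, giving $p_A$ as an accumulation point of $B$ in $Y$.

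The main obstacle is verifying the Hausdorff condition, specifically separating two distinct new points $p_A \ne p_B$: this requires disjoint open subsets $O_A, O_B$ of $X$ containing cofinite subsets of $A$ and of $B$ respectively, and after absorbing the finite set $A \cap B$ into the exceptions it reduces to separating the disjoint closed discrete countable sets $A \setminus (A \cap B)$ and $B \setminus (A \cap B)$ by disjoint open sets --- a $\mathcal{D}_\omega$-normality statement. I would therefore prove as a key auxiliary lemma that every $\mathcal{D}_\omega$-regular space is $\mathcal{D}_\omega$-normal, adapting the classical proof that regular Lindel\"of spaces are normal: given disjoint closed discrete countable $A, B$, for each $a \in A$ the $\mathcal{D}_\omega$-regularity applied to $a$ and $B$ yields disjoint open sets $V_a \ni a$ and $U_a \supseteq B$, from which $\overline{V_a} \subseteq X \setminus U_a$ forces $\overline{V_a} \cap B = \emptyset$, and symmetrically one chooses $W_b \ni b$ with $\overline{W_b} \cap A = \emptyset$. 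Enumerating $A = \{a_n\}$ and $B = \{b_n\}$, the standard shrinking trick $V_n^\ast := V_{a_n} \setminus \bigcup_{m \le n}\overline{W_{b_m}}$ and $W_n^\ast := W_{b_n} \setminus \bigcup_{m \le n}\overline{V_{a_m}}$ produces the desired disjoint open neighborhoods of $A$ and $B$. Hausdorff separation of a point $x \in X$ from $p_A$ then follows directly from $\mathcal{D}_\omega$-regularity applied to $(x, A)$ if $x \notin A$, or to $(x, A \setminus \{x\})$ if $x \in A$, completing the verification.
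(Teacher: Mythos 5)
Your proposal is correct and follows essentially the same route as the paper: the same Mr\'owka--Isbell-style extension $Y=X\cup\mathcal{A}$ over a $\mathcal{D}_{\omega}$-maximal almost disjoint family obtained by Zorn's Lemma, with the same topology, the same choice of $D=X$ for countable pracompactness, and the same reduction of Hausdorffness to $\mathcal{D}_{\omega}$-normality. The only difference is that you prove the auxiliary implication ``$\mathcal{D}_{\omega}$-regular $\Rightarrow$ $\mathcal{D}_{\omega}$-normal'' inline, whereas the paper cites it as Proposition 2.1 of \cite{BBR1}.
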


\begin{proof}
By Zorn's Lemma, there exists a $\mathcal{D}_{\omega}$-maximal almost disjoint family $\mathcal{A}$ on
$X$. Let $Y=X\cup \mathcal{A}$. We endow $Y$ with the topology $\tau$
defined as follows. A subset $U\subset Y$ belongs to $\tau$ iff it satisfies the following
conditions:
\begin{itemize}
\item if $x\in U\cap X$, then there exists an open neighborhood $V$ of $x$ in $X$ such that $V\subset U$;
\item if $A\in U\cap \mathcal{A}$, then there exists a cofinite subset $A^{'}\subset A$ and an open set $V$ in $X$ such that $A^{'}\subset V\subset U$.
\end{itemize}
Observe that $X$ is an open dense subset of $Y$ and $\mathcal{A}$ is discrete and closed in $Y$. Since $X$ is $\mathcal{D}_{\omega}$-regular for each distinct points $x\in X$ and $y\in Y$ there exist disjoint open neighborhoods $U_x$ and $U_y$ in $Y$. By Proposition 2.1 from~\cite{BBR1}, each $\mathcal{D}_{\omega}$-regular topological space is $\mathcal{D}_{\omega}$-normal. Fix any distinct $A,B\in\mathcal{A}$. Put $A^{'}=A\setminus (A\cap B)$ and $B^{'}=B\setminus (A\cap B)$. By the $\mathcal{D}_{\omega}$-normality of $X$ there exist disjoint open neighborhoods $U_{A^{'}}$ and $U_{B^{'}}$ of $A^{'}$ and $B^{'}$, respectively. Then the sets $U_A=\{A\}\cup U_{A^{'}}$ and $U_B=\{B\}\cup U_{B^{'}}$ are disjoint open neighborhoods of $A$ and $B$, respectively, in $Y$. Hence the space $Y$ is Hausdorff.

Observe that the maximality of the family $\mathcal{A}$ implies that there exists no countable discrete subset $D\subset X$ which is closed in $Y$. Hence each infinite subset $A$ in $X$ has an accumulation point in $Y$, that is, $Y$ is countably pracompact.
\end{proof}

However, there exists a Hausdorff topological space which cannot be embedded as a dense open subset into countably pracompact topological spaces.
\begin{example}
Let $\tau$ be the usual topology on the real line $\mathbb{R}$ and $C=\{A\subset \mathbb{R}: |\mathbb{R}\setminus A|\leq \omega\}$. By $\tau^*$ we denote the topology on $\mathbb{R}$ which is generated by the subbase $\tau\cup C$. Obviously, the space $\mathbb{R}^*=(\mathbb{R},\tau^*)$ is Hausdorff. We claim that $\mathbb{R}^*$ cannot be embedded as a dense open subset into a countably pracompact topological space. Assuming the contrary, let $X$ be a countably pracompact topological space which contains $\mathbb{R}^*$  as a dense open subspace. Since $X$ is countably pracompact there exists a dense subset $Y$ of $X$ such that each infinite subset of $Y$ has an accumulation point in $X$. Since $\mathbb{R}^*$ is open and dense in $X$ the set $Z=\mathbb{R}^*\cap Y$ is dense in $X$. Moreover, it is
dense in $(\mathbb{R},\tau)$. Fix any point $z\in Z$ and a sequence $\{z_n\}_{n\in\omega}$
of distinct points of $Z\setminus\{z\}$ converging to $z$ in $(\mathbb{R},\tau)$. Since $Z$ is
dense in $(\mathbb{R},\tau)$ such a sequence exists. Observe that $\{z_n\}_{n\in\omega}$ is closed
and discrete in $\mathbb{R}^*$. So its accumulation point $x$ belongs to $X\setminus \mathbb{R}^*$.
Observe that for each open neighborhood $U$ of $z$ in $\mathbb{R}^*$ all but finitely many $z_n$ belongs to the closure of $U$. Hence $x\in\overline{U}$ for each open neighborhood $U$ of $z$ which contradicts to the Hausdorffness of $X$.
\end{example}


\begin{theorem}\label{thcp}
Each topological space can be embedded as a closed subset into an $\omega$-bounded pracompact topological space.
\end{theorem}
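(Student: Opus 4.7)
The plan is to give a direct construction producing a Hausdorff $\omega$-bounded pracompact space $Y$ that contains $X$ as a closed subspace, without needing to pass through Proposition~\ref{th0} and the $E_Y^f$ machinery. The idea is to attach a long scaffolding $[0,\kappa)$ to $X$, with $\kappa$ a regular uncountable cardinal chosen with $\kappa>|X|$, in such a way that (i) $X$ remains a closed subspace carrying its original topology, (ii) $[0,\kappa)$ is dense in $Y$, and (iii) the induced topology on $[0,\kappa)$ is just the order topology.

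Concretely, let $S\subseteq[0,\kappa)$ be the set of successor ordinals and choose a map $\phi\colon S\to X$ with every fibre $\phi^{-1}(x)$ cofinal in $[0,\kappa)$, which is possible since $\kappa$ is regular and $|S|=\kappa>|X|$, so $S$ may be partitioned into $|X|$ cofinal pieces. Set $Y=X\cup[0,\kappa)$ as a set and declare its topology to be generated by the subbase consisting of (a) the order-open subsets of $[0,\kappa)$ and (b) the sets $V\cup\{\alpha\in S:\alpha\ge\gamma,\ \phi(\alpha)\in V\}$ for $V$ open in $X$ and $\gamma<\kappa$.

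The routine verifications split into three packages. For Hausdorffness: two ordinals are separated in the order topology; two distinct points of $X$ are separated by (b)-sets built from disjoint $X$-neighbourhoods, whose ordinal tails are disjoint because $\phi^{-1}$ preserves disjointness; and a point $x\in X$ is separated from an ordinal $\alpha$ by taking $\gamma>\alpha$ in the (b)-set around $x$. The set $[0,\kappa)$ is visibly open, so $X$ is closed; and intersecting any (b)-set with $X$ returns exactly $V$, so $X$ keeps its original topology as a subspace of $Y$. The crucial technical observation is that $\phi^{-1}(V)_{\ge\gamma}$ is a set of \emph{isolated} successor ordinals and therefore is \emph{already} order-open, so the (b)-subbase contributes nothing new when restricted to $[0,\kappa)$; thus the subspace topology of $[0,\kappa)$ inside $Y$ coincides with the order topology.

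The main point is that $Y$ is $\omega$-bounded pracompact with dense set $D=[0,\kappa)$. Density is immediate from cofinality of the fibres of $\phi$. For a countable $S_0\subseteq D$, put $\beta=\sup S_0$; regularity of $\kappa$ gives $\beta<\kappa$. A (b)-neighbourhood of any $x\in X$ taken with $\gamma=\beta+1$ misses $S_0$, so no point of $X$ lies in $\overline{S_0}$, and hence $\overline{S_0}\subseteq[0,\beta]$. Under its subspace topology from $Y$—which, by the remark above, is just the order topology—the ordinal $[0,\beta]$ is compact, and $\overline{S_0}$ is a closed subset of it, hence compact. The main obstacle I anticipate is precisely the interaction between the two flavours of subbase elements on the ordinal side: restricting $\phi$'s domain to successor ordinals is exactly the trick that keeps the (b)-sets from properly refining the order topology and so preserves compactness of bounded closed ordinal intervals, which in turn yields the $\omega$-bounded pracompactness of $Y$.
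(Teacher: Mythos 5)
Your construction is correct, but it takes a genuinely different route from the paper's. The paper proves Theorem~\ref{thcp} by forming $Y=[0,\omega_1]\times X^*\setminus\{(\omega_1,\infty)\}$, where $X^*$ is the one-point compactification of $X$ retopologized discretely, with a topology in which the points of $[0,\omega_1)\times X$ are isolated, $(\alpha,\infty)$ has neighbourhoods $(\beta,\alpha]\times A$ for cofinite $A$, and $(\omega_1,x)$ has neighbourhoods $(\alpha,\omega_1]\times V$ with $V$ open in the original topology of $X$; then $[0,\omega_1)\times X^*$ is an open dense $\omega$-bounded subspace and $\{\omega_1\}\times X$ is a closed copy of $X$. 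You instead glue a single long ray $[0,\kappa)$ directly onto $X$ via a fibre map $\phi$ defined only on successor ordinals, and your two key observations --- that restricting $\phi$ to successors keeps the induced topology on $[0,\kappa)$ equal to the order topology (so bounded closed intervals remain compact), and that regularity of $\kappa$ bounds every countable subset of the dense ray --- do carry the whole argument; I checked the Hausdorffness, closedness of $X$, density, and compact-closure claims and they all go through. The trade-off is that your scaffold must grow with the space: you need a regular $\kappa>|X|$ so that $\phi$ can have all fibres cofinal, whereas the paper's scaffold is always indexed by $[0,\omega_1]$ and absorbs the size of $X$ into the compact factor $X^*$. (Incidentally, your framing suggests you are avoiding the $E_Y^f$ machinery of Proposition~\ref{th0}; note that the paper's own proof of this theorem does not use that machinery either --- it is reserved for the pseudocompact semigroup results.)
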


\begin{proof}
Let $X$ be a topological space. By $X_d$ we denote the set $X$ endowed with a discrete topology. Let $X^*$ be the one point compactification of the  space $X_d$. The unique non-isolated point of $X^*$ is denoted by $\infty$. Put $Y=[0,\omega_1]{\times}X^*\setminus \{(\omega_1,\infty)\}$. We endow $Y$ with a topology $\tau$ defined as follows. A subset $U$ is open in $(Y,\tau)$ if it satisfies the following conditions:
\begin{itemize}
\item if $(\alpha, \infty)\in U$, then there exist $\beta<\alpha$ and a cofinite subset $A$ of $X^*$ which contains $\infty$ such that $(\beta,\alpha]{\times}A\subset U$;
\item if $(\omega_1,x)\in U$, then there exist $\alpha<\omega_1$ and an open (in $X$) neighborhood $V$ of $x$ such that $(\alpha,\omega_1]{\times}V\subset U$.
\end{itemize}
Observe that the subset $[0,\omega_1){\times} X^*\subset Y$ is open, dense and $\omega$-bounded. Hence $Y$ is $\omega$-bounded pracompact. It is easy to see that the subset $\{\omega_1\}{\times}X\subset Y$ is closed and homeomorphic to $X$.
\end{proof}

Next we introduce a construction which helps us to prove that any space can be embedded as a closed subspace into an H-closed topological space.

Denote the subspace $\{1-1/n\mid n\in\N\}\cup\{1\}$ of the real line by
$J$.
Let $X$ be a dense open subset of a topological space $Y$. By $Z$ we denote the set
$(J{\times}Y)\setminus\{(t,y)\mid y\in Y\setminus X\hbox{ and } t>0\}.$
By $H_Y(X)$ we denote the set $Z$ endowed with a topology defined as follows.
A subset $U\subset Z$ is open in $H_Y(X)$ if it satisfies the following conditions:
\begin{itemize}
\item for each $x\in X$ if $(t,x)\in U$, then there exist open neighborhoods $V_t$ of $t$ in $J$ and $V_x$ of $x$ in $X$ such that $V_t{\times}V_x\subset U$;
\item for each $y\in Y\setminus X$ if $(0,y)\in U$, then there exists an open neighborhood $V_y$ of $y$ in $Y$ such that $\{0\}{\times} (V_y\setminus X)\cup (J\setminus\{1\}){\times}(V_y\cap X)\subset U$.
\end{itemize}

Obviously, the space $H_Y(X)$ is Hausdorff and the subset $\{(1,x)\mid x\in X\}\subset H_Y(X)$ is closed and homeomorphic to $X$.

\begin{proposition}\label{H-cl}
If $Y$ is an H-closed topological space, then $H_Y(X)$ is H-closed.
\end{proposition}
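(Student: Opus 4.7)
My plan is to verify H-closedness of $H_Y(X)$ by producing, for each open filter $\mathcal F$ on $H_Y(X)$, a $\theta$-accumulation point. The projection $\pi\colon H_Y(X) \to Y$, $(t, y) \mapsto y$, is readily seen to be continuous from the two clauses defining the topology on $H_Y(X)$, so $\{\pi(F) : F \in \mathcal F\}$ is a filter base on $Y$. Letting $\mathcal G$ be the generated filter and invoking the H-closedness of $Y$, I obtain a $\theta$-accumulation point $y^* \in Y$ of $\mathcal G$.

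I will then split based on whether $y^* \in X$. If $y^* \in Y \setminus X$, my candidate is $(0, y^*)$. A basic open neighborhood of $(0, y^*)$ has the form $B = \{0\} \times (V \setminus X) \cup (J \setminus \{1\}) \times (V \cap X)$ for some open $V \ni y^*$ in $Y$; using density of $X$ in $Y$ and of $J \setminus \{1\}$ in $J$, a direct computation should yield $\overline{B} = \pi^{-1}(\overline{V}^{Y})$. The $\theta$-accumulation property of $y^*$ for $\mathcal G$ then gives $\overline{V}^{Y} \cap \pi(F) \neq \varnothing$ for every $F \in \mathcal F$, whence $\overline{B} \cap F \neq \varnothing$.

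The case $y^* \in X$ is trickier: basic neighborhoods of $(0, y^*)$ are products $V_t \times V_{y^*}$ whose closures cannot reach any $(s, x)$ with $x \in X$ and $s \notin \overline{V_t}^{J}$, so $(0, y^*)$ may fail when $\mathcal F$ lives away from $t = 0$. To fix the first coordinate I introduce an auxiliary filter $\mathcal H$ on $J$ with base $S_{F, V} = \{t \in J : \exists\, y \in \overline{V}^{Y},\ (t, y) \in F\}$, indexed by $F \in \mathcal F$ and open $V \ni y^*$ in $Y$. The $\theta$-accumulation of $y^*$ makes each $S_{F, V}$ non-empty, and $S_{F_1 \cap F_2,\, V_1 \cap V_2} \subseteq S_{F_1, V_1} \cap S_{F_2, V_2}$ confirms the filter-base property. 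Since $J$ is a convergent sequence and hence compact, $\mathcal H$ has an accumulation point $t^* \in J$; my new candidate is $(t^*, y^*)$, which lies in $H_Y(X)$ because $y^* \in X$.

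To verify $(t^*, y^*)$ is a $\theta$-accumulation point, I fix a basic neighborhood $V_t \times V_{y^*}$ with $V_t \ni t^*$ in $J$ and $V_{y^*} \ni y^*$ open in $X$, and use accumulation of $\mathcal H$ to produce $(t, y) \in F$ with $t \in V_t$ and $y \in \overline{V_{y^*}}^{Y}$. If $y \in X$, then $(t, y) \in \overline{V_t}^{J} \times (\overline{V_{y^*}}^{Y} \cap X) \subseteq \overline{V_t \times V_{y^*}}$. If $y \notin X$, the construction of $H_Y(X)$ forces $t = 0$, so $0 \in V_t$ and the closure computation places $(0, y) \in \{0\} \times (\overline{V_{y^*}}^{Y} \setminus X) \subseteq \overline{V_t \times V_{y^*}}$. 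Either way, $\overline{V_t \times V_{y^*}} \cap F \neq \varnothing$. The main obstacle is recognizing that the subcase $y^* \in X$ cannot be dispatched via $(0, y^*)$, which is why compactness of $J$ must enter through the secondary filter $\mathcal H$.
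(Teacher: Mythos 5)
Your argument is correct, and it is organized quite differently from the paper's. The paper splits into three cases according to how the filter traces on the horizontal levels $\{t\}{\times}Y$: if some level $t<1$ (or the level $t=1$) meets every member of the filter, it restricts the filter to that slice, which is a homeomorphic copy of $Y$ and hence H-closed; the residual case, where the filter eventually escapes every fixed level, is handled by pushing everything to level $1$ via the projection to $Y$. You instead project to $Y$ once at the outset, obtain a $\theta$-accumulation point $y^*$ of the projected filter, and then locate the first coordinate: for $y^*\in Y\setminus X$ the closure of a basic neighborhood of $(0,y^*)$ absorbs all of $\pi^{-1}(\overline{V})$ (your closure computation is right, using density of $X$ in $Y$ and of $J\setminus\{1\}$ in $J$), while for $y^*\in X$ you pin down $t^*$ by running a second filter through the compact space $J$. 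This is a genuinely different and arguably more unified decomposition; you correctly identify the crux (that $(0,y^*)$ fails when $y^*\in X$ because product neighborhoods have thin closures in the $J$-coordinate), and the compactness of $J$ replaces the paper's ad hoc treatment of its case (3). Two small points worth making explicit: the set $V_{y^*}$ is open in $X$ and $X$ is open in $Y$, so $V_{y^*}$ is open in $Y$ and $S_{F,V_{y^*}}$ really is a member of your filter base $\mathcal H$ (alternatively, for any open $V\subseteq Y$ one has $\overline{V}=\overline{V\cap X}$ by density of $X$); and since your argument nowhere uses that $\mathcal F$ is an open filter, you may as well run it for an arbitrary filter and produce a $\theta$-accumulation point directly, which matches the definition used in the paper --- the restriction to open filters is harmless (for an open filter a $\theta$-accumulation point is an accumulation point, since an open $F$ meeting $\overline{U}$ must meet $U$) but unnecessary.
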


\begin{proof}
Fix an arbitrary filter $\mathcal{F}$ on $H_Y(X)$.
One of the following three cases holds:
\begin{itemize}
\item[(1)] there exists $t\in J\setminus\{1\}$ such that for each $F\in \mathcal{F}$ there exists $y\in Y$ such that $(t,y)\in F$;
\item[(2)] for each $F\in \mathcal{F}$ there exists $x\in X$ such that $(1,x)\in F$;
\item[(3)] for every $t\in J$ there exists $F\in \mathcal{F}$ such that $(t,y)\notin F$ for each $y\in Y$.
\end{itemize}

Consider case (1).
For each $F\in \mathcal{F}$ put $F_t=F\cap (\{t\}{\times}X\cup \{0\}{\times}(Y\setminus X))$.
Clearly, a family $\mathcal{F}_{t}=\{F_t\mid F\in \mathcal{F}\}$ is a filter on $\{t\}{\times}X\cup \{0\}{\times}(Y\setminus X)$.
Observe that for each $t\in J\setminus\{1\}$ the subspace $\{t\}{\times}X\cup \{0\}{\times}(Y\setminus X)$ is homeomorphic to $Y$ and hence is H-closed.
Then there exists a $\theta$-accumulation point $z\in \{t\}{\times}X\cup \{0\}{\times}(Y\setminus X)$ of the filter $\mathcal{F}_t$.
Obviously, $z$ is
a $\theta$-accumulation point of the filter $\mathcal{F}$.

Consider case (2). For each $F\in \mathcal{F}$ put $F_0=\{(0,x)\mid (1,x)\in F\}$. Clearly, the family $\mathcal{F}_{0}=\{F_0\mid F\in \mathcal{F}\}$ is a filter on the H-closed space $\{0\}{\times} Y$. Hence there exists $y\in Y$ such that $(0,y)$ is a $\theta$-accumulation point of the filter $\mathcal{F}_{0}$. If $y\in X$, then $(1,y)$ is a $\theta$-accumulation point of the filter $\mathcal{F}$. If $y\in Y\setminus X$, then we claim that $(0,y)$ is a $\theta$-accumulation point of the filter $\mathcal{F}$.
Indeed, let $U$ be any open neighborhood of the point $(y,0)$.
There exists an open neighborhood $V_y$ of $y$ in $Y$ such that
$V=\{0\}{\times} (V_y\setminus X)\cup (J\setminus\{1\}){\times}(V_y\cap X)\subset U$.
Since $(0,y)$ is a $\theta$-accumulation point of the filter $\mathcal{F}_{0}$,
$\overline{V}\cap F_0\neq \emptyset$ for each $F_0\in \mathcal{F}_0$.
Fix any $F\in \mathcal{F}$ and $(0,z)\in \overline{V}\cap F_0$. The definition of the topology on
$H_Y(X)$ yields that the set $\{(t,z)\mid t\in J\setminus\{1\}\}$ is contained in $\overline{V}$.
Then $(1,z)\in\overline{\{(t,z)\mid t\in J\setminus\{1\}\}}\subset\overline{V}$.
Hence for each $F\in \mathcal{F}$ the set
$\overline{U}\cap F$ is non-empty providing that $(0,y)$ is a
$\theta$-accumulation point of the filter $\mathcal{F}$.

Consider case (3).
For each $F\in \mathcal{F}$ denote
$$F^*=\{(0,x)\mid \hbox{there exists }t\in I \hbox{ such that }(t,x)\in F\}.$$
Let $(0,y)$ be a $\theta$-accumulation point of the
filter $\mathcal{F}^{*}=\{F^*\mid F\in \mathcal{F}\}$.

If $y\in X$, then we claim that $(1,y)$ is a
$\theta$-accumulation point of the filter $\mathcal{F}$. Indeed fix any $F\in \mathcal{F}$ and a basic open neighborhood
$V=\{t\in J\mid t> 1-1/n\}{\times} U$ of $(1,y)$ where $n$ is some fixed positive integer and $U$ is an open neighborhood of $y$ in $X$. By the assumption, there exist sets $F_0,\ldots,F_n\subset \mathcal{F}$ such that $F_i\cap \{(1/i,x)\mid x\in Y\}=\emptyset$ for every $i\leq n$. Then the set $H=\cap_{i\leq n}F_i\cap F$ belongs to $\mathcal{F}$ and for each $(t,x)\in H$,  $t>1-1/n$.
Since $(0,y)$ is a $\theta$-accumulation point of the
filter $\mathcal{F}^{*}$ the set $\overline{\{0\}\times U}\cap H^*$
is non-empty. Fix any $(0,x)\in\overline{\{0\}\times U}\cap H^*$.
Then there exists $k>n$ such that $(1-1/k,x)\in H\subset F$. The definition of the topology on the space $H_Y(X)$ implies that $(1-1/k,x)\in \overline{V}\cap H\subset \overline{V}\cap F$ which implies that $(1,y)$ is a $\theta$-accumulation point of the filter $\mathcal{F}$.

If $y\in Y\setminus X$, then even more simple arguments show
that $(0,y)$ is a $\theta$-accumulation point of the filter $\mathcal{F}$.

Hence the space $H_Y(X)$ is H-closed.
\end{proof}

\begin{theorem}\label{th2}
For any topological space $X$ there exists an H-closed space $Z$ which contains $X$ as a closed subspace.
\end{theorem}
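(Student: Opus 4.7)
The entire argument is designed to reduce to Proposition~\ref{H-cl}: that proposition shows that whenever $X$ embeds as a \emph{dense open} subspace of an H-closed space $Y$, the extension $H_Y(X)$ is itself H-closed, and the remark immediately preceding it observes that $\{(1,x):x\in X\}$ is a closed copy of $X$ sitting inside $H_Y(X)$. So the proof of Theorem~\ref{th2} reduces to the following task: embed the given (Hausdorff) space $X$ as a dense open subspace of some H-closed space $Y$. Having done this, one simply sets $Z:=H_Y(X)$.

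\textbf{Producing $Y$.} For this embedding I would use Kat\v{e}tov's H-closed extension. Let $\mathcal{U}$ denote the collection of all free open ultrafilters on $X$ without an accumulation point in $X$, put $Y=X\cup\mathcal{U}$, and topologise $Y$ so that $W\subseteq Y$ is open iff $W\cap X$ is open in $X$ and, for every $\mathcal{V}\in W\cap\mathcal{U}$, there exists $U\in\mathcal{V}$ with $\{\mathcal{V}\}\cup U\subseteq W$. By construction every $x\in X$ has a neighbourhood entirely inside $X$, so $X$ is open in $Y$; every basic neighbourhood $\{\mathcal{V}\}\cup U$ of a point of $\mathcal{U}$ meets $X$ in $U\neq\emptyset$, so $X$ is dense in $Y$; and Hausdorffness is a short verification using the ultrafilter property together with the requirement that the $\mathcal{V}\in\mathcal{U}$ have no accumulation point inside $X$.

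\textbf{Main obstacle.} The only genuine content is showing that $Y$ is H-closed. Given any open filter $\mathcal{F}$ on $Y$, its trace $\mathcal{F}\cap X:=\{F\cap X:F\in\mathcal{F}\}$ is a proper open filter on $X$ (properness follows from density of $X$ in $Y$). If this trace has an accumulation point $x\in X$, then $x$ is an accumulation point of $\mathcal{F}$ in $Y$ and we are done. Otherwise, by Zorn's Lemma the trace extends to a free open ultrafilter $\mathcal{V}$ on $X$ with no accumulation point in $X$; by definition $\mathcal{V}\in\mathcal{U}\subseteq Y$, and the topology on $Y$ then forces every basic neighbourhood $\{\mathcal{V}\}\cup U$ of $\mathcal{V}$ to meet each $F\in\mathcal{F}$, so $\mathcal{V}$ is the required accumulation point. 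This verification is the only nontrivial step of the argument; alternatively it can be replaced by an appeal to Kat\v{e}tov's classical theorem on H-closed extensions. Once $Y$ is in hand, Theorem~\ref{th2} follows by a direct application of Proposition~\ref{H-cl} with $Z:=H_Y(X)$.
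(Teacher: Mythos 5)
Your proof is correct and follows essentially the same route as the paper: reduce to Proposition~\ref{H-cl} by first embedding $X$ as a dense open subspace of an H-closed space $Y$ and then taking $Z=H_Y(X)$. The only difference is that the paper simply cites the existence of such a $Y$ (the Kat\v{e}tov extension, via \cite[Problem 3.12.6]{Engelking-1989}) whereas you spell out its construction and verification.
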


\begin{proof}
For each Hausdorff topological space $X$ there exists an H-closed space $Y$ which contains $X$ as a dense open subspace
(see~\cite[Problem 3.12.6]{Engelking-1989}). By Proposition~\ref{H-cl}, the space $H_Y(X)$ is H-closed.
It remains to note that the set $\{(1,x)\mid x\in X\}\subset H_Y(X)$ is closed and homeomorphic to $X$.
\end{proof}

\section{Applications for topological semigroups}

A semigroup $S$ is called an \emph{inverse semigroup}, if for each element $a\in S$ there exists a unique
element $a^{-1}\in S$ such that $aa^{-1}a=a$ and $a^{-1}aa^{-1}=a^{-1}$.
The map which associates every element of an inverse semigroup to its
inverse is called an \emph{inversion}.

A topological (inverse) semigroup is a topological space endowed with a
continuous semigroup operation (and an~inversion, resp.).
In this case the topology of the space is called (\emph{inverse}, resp.)
\emph{semigroup topology}.
A semitopological semigroup is a topological space endowed
with a separately continuous semigroup operation. It this case the topology
of the space is called \emph{shift-continuous}.

Let $X$ be a non-empty set. By $\mathcal{B}_{X}$ we denote the set
 $
 X{\times}X\cup\{0\}
 $
where $0\notin X{\times}X$ endowed with the following semigroup
operation:
\begin{equation*}
\begin{split}
&(a,b)\cdot(c,d)=
\left\{
  \begin{array}{cl}
    (a,d), & \hbox{ if~ } b=c;\\
    0, & \hbox{ if~ } b\neq c,
  \end{array}
\right.\\
&\hbox{and } (a,b)\cdot 0=0\cdot(a,b)=0\cdot 0=0, \hbox{ for each } a,b,c,d\in X.
\end{split}
\end{equation*}
The semigroup $\mathcal{B}_{X}$ is called the \emph{semigroup of $X{\times}X$-matrix units}. Observe that semigroups $\mathcal{B}_{X}$ and $\mathcal{B}_{Y}$ are isomorphic iff $|X|=|Y|$.

If a set $X$ is infinite then the semigroup of $X{\times}X$-matrix units cannot be embedded into a compact topological semigroup 
(see \cite[Theorem 3]{Gutik-2005}). In ~\cite[Theorem~5]{Gutik-2009} this result was generalized for countably compact topological semigroups. 
Moreover, in~\cite[Theorem~4.4]{BardGut-2016(1)} it was shown that for an infinite set $X$ the semigroup $\mathcal{B}_{X}$ cannot be embedded densely into a feebly compact topological semigroup.

A bicyclic monoid $\mathcal{C}(p,q)$ is the semigroup with the identity $1$ generated by two elements $p$ and $q$ subject to the condition $pq=1$.
The bicyclic monoid is isomorphic to the set $\omega{\times}\omega$ endowed with the following semigroup operation:

\begin{equation*}
(a,b)\cdot(c,d)=
\left\{
  \begin{array}{cl}
    (a+c-b,d), & \hbox{ if~ } b\leq c;\\
    (a,d+b-c), & \hbox{ if~ } b> c.
  \end{array}
\right.
\end{equation*}

Neither stable nor $\Gamma$-compact topological semigroups can
contain a copy of the bicyclic monoid (see \cite{Anderson-Hunter-Koch-1965,Hildebrant-Koch-1988}).
In~\cite{GutRep-2007} it was proved that the bicyclic monoid does not embed into a countably
compact topological inverse semigroup. Also a topological semigroup with a pseudocompact square
cannot contain the bicyclic monoid~\cite{BanDimGut-2010}.
On the other hand, in~\cite[Theorem 6.1]{BanDimGut-2010} it was proved that there exists a
Tychonoff countably pracompact topological semigroup $S$ densely containing the bicyclic monoid.
Moreover, under Martin's Axiom the semigroup $S$ is countably compact
(see~\cite[Theorem 6.6 and Corollary 6.7]{BanDimGut-2010}).
However, it is still unknown whether there exists under ZFC a countably compact topological semigroup
containing the bicyclic monoid (see~\cite[Problem 7.1]{BanDimGut-2010}).
Also, in~\cite{BanDimGut-2010} the following problem was posed:

\begin{problem}[{\cite[Problem 7.2]{BanDimGut-2010}}]\label{problem}
Is there a pseudocompact topological semigroup $S$ that contains a closed copy of the bicyclic monoid?
\end{problem}


Embeddings of semigroups which are generalizations of the bicyclic monoid into compact-like topological semigroups were investigated in~\cite{Bardyla-2019(1),BardGut-2016(1)}.
Namely, in~\cite{BardGut-2016(1)} it was proved that for each cardinal $\lambda>1$ a polycyclic monoid
$\mathcal{P}_{\lambda}$ does not embed as a dense subsemigroup into a feebly compact topological
semigroup. In~\cite{Bardyla-2019(1)} were described graph inverse semigroups which embed densely
into feebly compact topological semigroups.

Observe that the space $[0,\omega_1]$ endowed with a semigroup operation of taking minimum becomes a
topological semilattice and therefore a topological inverse semigroup.
\begin{lemma}\label{l1}
Let $X$ and $Y$ be semitopological (topological, topological inverse, resp.) semigroups  such that there exists a continuous injective homomorphism $f:X\rightarrow Y$. Then $E_Y^f(X)$ is a semitopological (topological, topological inverse, resp.) semigroup with respect to the semigroup operation inherited from a direct product of semigroups $(\omega_1,\min)$ and $Y$.
\end{lemma}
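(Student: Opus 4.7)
The plan is to verify that coordinatewise multiplication (with $\min$ on the first factor and the $Y$-operation on the second) sends $E_Y^f(X){\times}E_Y^f(X)$ into $E_Y^f(X)$, is (separately or jointly) continuous as appropriate, and is joined by continuous inversion in the inverse-semigroup case. Closure is automatic except for $(\omega_1,y_1)\cdot(\omega_1,y_2)$, where by definition $y_i=f(x_i)$ and hence $y_1y_2=f(x_1x_2)\in f(X)$, so the product still lies in $E_Y^f(X)$. To check continuity at a pair $((\alpha,y_1),(\beta,y_2))$, split into two situations according to whether $\min(\alpha,\beta)<\omega_1$ or $\alpha=\beta=\omega_1$.

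If $\min(\alpha,\beta)<\omega_1$, a basic neighborhood of $(\min(\alpha,\beta),y_1y_2)$ has the form $(\gamma,\min(\alpha,\beta)]{\times}V$ with $V$ open in $Y$, and the problem reduces immediately to continuity of multiplication in $Y$ combined with an easy check on the ordinal coordinates. The substantive case is $\alpha=\beta=\omega_1$: writing $y_i=f(x_i)$ and $y_1y_2=f(x_1x_2)$, a basic neighborhood $U$ of the product contains a set of the form $(\gamma,\omega_1){\times}V\cup\{\omega_1\}{\times}f(W)$ with $V$ open in $Y$ around $f(x_1x_2)$, $W$ open in $X$ around $x_1x_2$, and $f(W)\subset V$. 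Continuity of multiplication in $X$ yields $W_i\ni x_i$ with $W_1W_2\subset W$; continuity of multiplication in $Y$ yields $V_i\ni f(x_i)$ with $V_1V_2\subset V$; and replacing $W_i$ by $W_i\cap f^{-1}(V_i)$ (open since $f$ is continuous) we may additionally assume $f(W_i)\subset V_i$. The sets $N_i=(\gamma,\omega_1){\times}V_i\cup\{\omega_1\}{\times}f(W_i)$ are then basic neighborhoods of $(\omega_1,f(x_i))$ in $E_Y^f(X)$, and going through the three sub-cases (both factors at level $\omega_1$, both below, or one of each) shows $N_1\cdot N_2\subset U$.

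The semitopological variant is obtained by fixing one factor and running the same construction using only one-sided translation continuities in $X$ and $Y$. For the inverse-semigroup variant, inversion on $[0,\omega_1]{\times}Y$ is $(\alpha,y)\mapsto(\alpha,y^{-1})$, since every element of the semilattice $([0,\omega_1],\min)$ is its own inverse; this preserves $E_Y^f(X)$ because any homomorphism between inverse semigroups satisfies $f(x^{-1})=f(x)^{-1}$ by uniqueness of inverses, so $y^{-1}=f(x^{-1})\in f(X)$ when $y=f(x)$. Continuity is verified by the same two-case analysis, this time using continuity of inversion in $X$ and $Y$ to produce $V'\ni y$ and $W'\ni x$ with $(V')^{-1}\subset V$, $(W')^{-1}\subset W$, and $f(W')\subset V'$. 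The only delicate routine step throughout is the mutual shrinking of $V_i$ and $W_i$ via $f^{-1}(V_i)$ in the $\omega_1$-level case; this is where the continuity of $f$ is essential, and everywhere else the argument is the transparent product-topology verification.
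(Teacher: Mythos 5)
Your proof is correct and follows essentially the same route as the paper's: a case split on whether the factors sit below or at level $\omega_1$, with neighborhoods of the form $(\gamma,\omega_1){\times}V_i\cup\{\omega_1\}{\times}f(W_i)$ obtained by shrinking $W_i$ inside $f^{-1}(V_i)$ exactly as in the paper's case (3). You merely merge the paper's two sub-$\omega_1$ cases into one and spell out the closure-under-multiplication and inversion details that the paper leaves implicit.
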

\begin{proof}
We prove this lemma for the case of topological semigroups $X$ and $Y$. Proofs in other cases are similar. Fix any elements $(\alpha,x), (\beta,y)$ of $E_Y^f(X)$.  Also, assume that
$\beta\leq \alpha$. In the other case the proof will be similar.
Fix any open neighborhood $U$ of $(\beta,xy)=(\alpha,x)\cdot(\beta,y)$. There are three cases to consider:
\begin{itemize}
\item[(1)] $\beta\leq \alpha<\omega_1$;
\item[(2)] $\beta< \alpha=\omega_1$;
\item[(3)] $\alpha=\beta=\omega_1$.
\end{itemize}
In case (1) there exist $\gamma<\beta$ and an open neighborhood $V_{xy}$ of $xy$ in $Y$ such that $(\gamma,\beta]{\times}V_{xy}\subset U$.  Since $Y$ is a topological semigroup there exist open neighborhoods $V_x$ and $V_y$ of $x$ and $y$, respectively, such that $V_x\cdot V_y\subset V_{xy}$. Put $U_{(\alpha,x)}=(\gamma,\alpha]{\times}V_x$ and $U_{(\beta,y)}=(\gamma,\beta]{\times}V_y$. It is easy to check that $U_{(\alpha,x)}\cdot U_{(\beta,y)}\subset (\gamma,\beta]{\times}V_{xy}\subset U$.

Consider case (2). Similarly as in case (1) there exist an ordinal $\gamma<\beta$ and open neighborhoods $V_x$, $V_y$ and $V_{xy}$ of $x,y$ and $xy$, respectively, such that $(\gamma,\beta]{\times}V_{xy}\subset U$ and $V_x\cdot V_y\subset V_{xy}$.
Since the map $f$ is continuous there exists an open neighborhood $V_{f^{-1}(x)}$ of $f^{-1}(x)$ in $X$ such that $f(V_{f^{-1}(x)})\subset V_x$. Put $U_{(\omega_1,x)}=(\beta,\omega_1){\times}V_x\cup \{\omega_1\}{\times}f(V_{f^{-1}(x)})$ and $U_{(\beta,y)}=(\gamma,\beta]{\times}V_y$. It is easy to check that $U_{(\omega_1,x)}\cdot U_{(\beta,y)}\subset (\gamma,\beta]{\times}V_{xy}\subset U$.

Consider case (3). There exist ordinal $\gamma<\omega_1$, an open neighborhood $V_{xy}$ of $xy$ in $Y$ and an open neighborhood $W_{f^{-1}(xy)}$ of $f^{-1}(xy)$ in $X$ such that
$(\gamma,\omega_1){\times}V_{xy}\cup \{\omega_1\}{\times}f(W_{f^{-1}(xy)})\subset U.$

Since $Y$ is a topological semigroup there exist open (in $Y$) neighborhoods $V_x$ and $V_y$ of $x$ and $y$, respectively, such that $V_x\cdot V_y\subset V_{xy}$.
Since the map $f$ is continuous and $X$ is a topological semigroup there exist open (in $X$) neighborhoods $W_{f^{-1}(x)}$ and $W_{f^{-1}(y)}$ of $f^{-1}(x)$ and $f^{-1}(y)$, respectively, such that $W_{f^{-1}(x)}\cdot W_{f^{-1}(y)}\subset W_{f^{-1}(xy)}$, $f(W_{f^{-1}(x)})\subset V_x$ and $f(W_{f^{-1}(y)})\subset V_y$.
Put $U_{(\omega_1,x)}=(\gamma,\omega_1){\times}V_x\cup \{\omega_1\}{\times}f(W_{f^{-1}(x)})$ and $U_{(\omega_1,y)}=(\gamma,\omega_1){\times}V_y\cup \{\omega_1\}{\times}f(W_{f^{-1}(y)})$.
It is easy to check that $U_{(\omega_1,x)}\cdot U_{(\omega_1,y)}\subset U$.

Hence the semigroup operation in $E_Y^f(X,\tau_X)$ is continuous.
\end{proof}

\begin{remark}\label{r}
The subsemigroup $\{(\omega_1,f(x))\mid x\in X\}\subset E_Y^f(X)$ is closed and topologically isomorphic to $X$.
\end{remark}


Proposition~\ref{th0}, Lemma~\ref{l1} and Remark~\ref{r} imply the following:

\begin{proposition}\label{cc1}
Let $X$ be a (semi)topological semigroup which admits a continuous injective homomorphism $f$ into a (semi)topological semigroup $Y$ and
$\mathcal C$ be any $[0,\omega_1)$-productive, extension closed class of spaces.
If $Y\in\mathcal C$ then the (semi)topological semigroup $E^f_Y(X)\in \mathcal C$
and contains a closed copy of a (semi)topological semigroup $X$.
\end{proposition}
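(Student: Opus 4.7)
The plan is essentially to package together the three ingredients immediately preceding the statement, since all the work has already been done.

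First, I would invoke Proposition~\ref{th0}: viewed purely as a topological space, $E_Y^f(X)$ is an extension of $[0,\omega_1)\times Y$. Because $\mathcal C$ is $[0,\omega_1)$-productive and closed under extensions, and $Y\in\mathcal C$ by hypothesis, we conclude $E_Y^f(X)\in\mathcal C$. This handles membership in $\mathcal C$.

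Second, to equip $E_Y^f(X)$ with the appropriate algebraic-topological structure, I would apply Lemma~\ref{l1}, which guarantees that $E_Y^f(X)$ carries the (separately) continuous semigroup operation inherited from the direct product of the topological semilattice $([0,\omega_1],\min)$ and the (semi)topological semigroup $Y$. In the inverse semigroup case, Lemma~\ref{l1} likewise supplies the continuity of inversion. Thus $E_Y^f(X)$ is a (semi)topological (inverse) semigroup in $\mathcal C$.

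Third, I would appeal to Remark~\ref{r} to identify the closed subsemigroup $\{\omega_1\}\times f(X)$ of $E_Y^f(X)$, which is topologically isomorphic to $X$. Combining the three observations yields the proposition.

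There is no real obstacle here: the topological content lives in Proposition~\ref{th0}, the algebraic/continuity content lives in Lemma~\ref{l1}, and the closed-copy statement is Remark~\ref{r}. The only thing to be slightly careful about is matching up the three variants (semitopological, topological, topological inverse) consistently across the three ingredients, but Lemma~\ref{l1} was stated precisely in this parallel form, so the assembly is immediate.
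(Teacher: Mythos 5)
Your proposal is correct and matches the paper exactly: the paper gives no separate proof, stating only that Proposition~\ref{th0}, Lemma~\ref{l1} and Remark~\ref{r} imply the result, which is precisely the three-ingredient assembly you describe.
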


Proposition~\ref{th}, Lemma~\ref{l1} and Remark~\ref{r} imply the following:

\begin{proposition}\label{cc2}
Let $X$ be a subsemigroup of a pseudocompact (semi)topological semigroup $Y$.
Then the (semi)topological semigroup $E_Y(X)$ is pseudocompact
and contains a closed copy of the (semi)\-to\-po\-lo\-gi\-cal semigroup $X$.
\end{proposition}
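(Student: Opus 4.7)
The statement is an essentially immediate consequence of the three results it cites, so my plan is to assemble them rather than prove anything new. I would take $f=\op{id}\colon X\hookrightarrow Y$ to be the inclusion map. Since $X$ is a subsemigroup of $Y$, the map $\op{id}$ is a continuous injective homomorphism of (semi)topological semigroups, which is exactly the hypothesis needed to feed into Lemma~\ref{l1}. Recall also that by the convention introduced just before Proposition~\ref{th}, $E_Y(X)$ is shorthand for $E_Y^{\op{id}}(X)$.

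First I would invoke Lemma~\ref{l1} applied to $\op{id}$: this equips $E_Y(X)$ with the semigroup operation inherited from the direct product of the topological semilattice $([0,\omega_1],\min)$ and $Y$, and guarantees that the resulting operation is separately continuous (respectively continuous, respectively continuous together with the inversion) whenever the operation on $X$ and $Y$ is of the corresponding type. Next, Proposition~\ref{th} gives that the underlying topological space $E_Y(X)$ is pseudocompact whenever $Y$ is pseudocompact, so this settles the pseudocompactness claim. Finally, Remark~\ref{r} identifies $\{\omega_1\}\times X\subset E_Y(X)$ as a closed subset topologically isomorphic to $X$ as a (semi)topological semigroup, giving the required closed copy.

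There is no genuine obstacle here: the nontrivial content is packaged in Lemma~\ref{l1} (continuity of the inherited multiplication at points with first coordinate $\omega_1$, which is delicate because of the hybrid neighborhood base at such points) and in Proposition~\ref{th} (pseudocompactness via the dense pseudocompact subspace $[0,\omega_1)\times Y$). The only thing to check is that the specialization $f=\op{id}$ satisfies the hypotheses of Lemma~\ref{l1}, which is automatic since the inclusion of a subsemigroup endowed with the subspace topology is always a continuous injective homomorphism. Thus the proof reduces to a single sentence citing Proposition~\ref{th}, Lemma~\ref{l1} and Remark~\ref{r}.
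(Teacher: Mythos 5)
Your proposal is correct and matches the paper exactly: the paper gives no separate proof of Proposition~\ref{cc2}, stating only that it follows from Proposition~\ref{th}, Lemma~\ref{l1} and Remark~\ref{r}, which is precisely the assembly you describe (with $f=\mathrm{id}$ the inclusion of the subsemigroup $X$ into $Y$).
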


By~\cite[Theorem 6.1]{BanDimGut-2010}, there exists a Tychonoff countably pracompact
(and hence pseudocompact) topological semigroup $S$ containing densely the bicyclic monoid. Hence Proposition~\ref{cc2} implies the following corollary which gives a positive answer to Problem~\ref{problem}.
\begin{corollary}
There exists a pseudocompact topological semigroup which contains a closed copy of the bicyclic monoid.
\end{corollary}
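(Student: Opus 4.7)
The plan is to combine Proposition~\ref{cc2} with the ambient semigroup constructed in~\cite[Theorem 6.1]{BanDimGut-2010}, so that essentially all of the work has already been done in the preceding results.

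First I would invoke~\cite[Theorem 6.1]{BanDimGut-2010} to obtain a Tychonoff countably pracompact topological semigroup $S$ containing the bicyclic monoid $\mathcal{C}(p,q)$ as a dense subsemigroup. Next I would verify that such an $S$ is pseudocompact: a countably pracompact space is feebly compact (given a sequence $\{U_n\}_{n\in\omega}$ of pairwise disjoint non-empty open sets, pick $x_n\in U_n\cap D$ where $D$ is the dense witness of countable pracompactness; the resulting infinite set $\{x_n\mid n\in\omega\}$ has an accumulation point in $X$, which meets infinitely many $U_n$, contradicting local finiteness), and any Tychonoff feebly compact space is pseudocompact. Thus $S$ satisfies the hypotheses of Proposition~\ref{cc2}.

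Then I would apply Proposition~\ref{cc2} with $Y=S$ and $X=\mathcal{C}(p,q)$, taking the dense inclusion $X\hookrightarrow Y$ as the required continuous injective homomorphism. The conclusion of that proposition is precisely that $E_S(\mathcal{C}(p,q))$ is a pseudocompact topological semigroup containing a closed copy of the bicyclic monoid, which is exactly the required statement.

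There is no genuine obstacle left at this point, since all of the delicate content has been factored into the earlier results: Proposition~\ref{th} supplies pseudocompactness of $E_Y(X)$ via the dense pseudocompact subspace $[0,\omega_1)\times Y$, Lemma~\ref{l1} supplies continuity of the semigroup operation on $E_Y^f(X)$, Remark~\ref{r} provides the closed topological isomorphism between $\{\omega_1\}\times f(X)$ and $X$, and~\cite{BanDimGut-2010} supplies the ambient pseudocompact semigroup. The corollary reduces to a one-line application of this machinery.
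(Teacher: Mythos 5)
Your proposal is correct and follows exactly the paper's own route: cite \cite[Theorem 6.1]{BanDimGut-2010} for a Tychonoff countably pracompact (hence pseudocompact) topological semigroup densely containing the bicyclic monoid, then apply Proposition~\ref{cc2}. Your explicit check that countable pracompactness plus Tychonoff yields pseudocompactness is a minor elaboration of what the paper states parenthetically.
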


Further we will need the following definitions.
A subset $A$ of a topological space is called $\theta$-closed if for each element $x\in X\setminus A$ there exists an open neighborhood $U$ of $x$ such that $\overline{U}\cap A=\emptyset$. Observe that if a topological space $X$ is regular then each closed subset $A$ of $X$ is $\theta$-closed.  A topological space $X$ is called {\em weakly H-closed} if each $\w$-filter $\mathcal{F}$ has a $\theta$-accumulation point in $X$. Weakly H-closed spaces were investigated in~\cite{Os1}. Obviously, for a topological space $X$ the following implications hold: $X$ is H-closed $\Rightarrow$ $X$ is weakly H-closed $\Rightarrow$ $X$ is feebly compact. However, neither of the above implications can be inverted. Indeed, an arbitrary pseudocompact but not countably compact space will be an example of feebly compact space which is not weakly H-closed. The space $[0,\w_1)$ with an order topology is an example of weakly H-closed but not H-closed space.

The following theorem shows that Theorem~\ref{th2} cannot be generalized for topological semigroups.
\begin{theorem}\label{matr}
The semigroup  $\mathcal{B}_{\w}$ of $\w{\times}\w$-matrix units does not embed into a weakly H-closed
topological semigroup.
\end{theorem}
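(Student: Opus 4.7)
The plan is to suppose, for contradiction, that $\mathcal{B}_\omega$ embeds as a topological subsemigroup into a weakly H-closed topological semigroup $T$, and to derive the contradiction by combining several carefully chosen $\omega$-filters with the matrix-unit identity $(0,k)\cdot(k,0) = e_0$, which holds for every $k\in\omega$.

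First, put $e_k := (k,k)$, $c_k := (0,k)$ and $r_k := (k,0)$ for $k\in\omega$, and introduce three $\omega$-filters on $T$: $\mathcal{F}$, $\mathcal{F}_c$, $\mathcal{F}_r$ with bases $\{e_k : k\geq n\}$, $\{c_k : k\geq n\}$ and $\{r_k : k\geq n\}$ respectively. By weak H-closedness of $T$, each admits a $\theta$-accumulation point $z, u, v\in T$.

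The next step is the annihilator property: $a\cdot w = w\cdot a = 0$ for every $a\in\mathcal{B}_\omega\setminus\{0\}$ and every $w\in\{z,u,v\}$. For instance, if $a=(i,j)$ and $w=z$, the identity $a\cdot e_k = (i,k)\delta_{jk}$ implies that the continuous translation $L_a$ sends $T_n$ into $\{0\}$ once $n>j$, so $L_a(z)=a\cdot z$ is a $\theta$-accumulation point of the $\{0\}$-filter, which by Hausdorffness equals $0$; the symmetric argument gives $z\cdot a = 0$, and the analysis for $u,v$ is analogous. As a consequence, $z\in\mathcal{B}_\omega$ forces $z=0$, since $(i,j)\cdot e_j = (i,j)\neq 0$ for any $(i,j)\neq 0$. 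Moreover, continuity of $R_v$ together with $c_k\cdot v = 0$ (a special case of the annihilator property) transports the $\theta$-accumulation of $u$ under $\mathcal{F}_c$ into a $\theta$-accumulation of the $\{0\}$-filter, yielding $u\cdot v = 0$.

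The conclusion of the proof rests on combining this with the identity $c_k\cdot r_k = e_0$. Consider the $\omega$-filter on $T\times T$ with base $D_n:=\{(c_k,r_k):k\geq n\}$; its push-forward under the continuous multiplication $\mu:T\times T\to T$ has base $\mu(D_n)=\{e_0\}$, so it is the principal filter at $e_0$, and by weak H-closedness of $T$ its unique $\theta$-accumulation point in $T$ is $e_0$. The goal is to show that $(u,v)$ itself is a $\theta$-accumulation point of this diagonal filter in $T\times T$; continuity of $\mu$ would then force $u\cdot v = \mu(u,v) = e_0$, contradicting $u\cdot v = 0$.

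The hardest step is precisely this last claim. It is easy to see that $(u,v)$ is a $\theta$-accumulation point of the (strictly coarser) product filter $\mathcal{F}_c\otimes\mathcal{F}_r$ on $T\times T$, but not a priori of the diagonal filter: for independently chosen open neighborhoods $U_1\ni u$ and $U_2\ni v$, the set of indices $k$ with $c_k\in\overline{U_1}$ and the set of indices $k$ with $r_k\in\overline{U_2}$ are each infinite but need not overlap. The resolution should come from an inductive diagonalisation, extracting a common cofinite-like subsequence of $\omega$ that simultaneously witnesses the $\theta$-accumulation of $u$ and of $v$ along every basic open neighborhood of $(u,v)$, possibly by iterating the weak H-closedness of $T$ on successively refined $\omega$-filters. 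Formalising this diagonal matching is the technical heart of the argument; once it is achieved, the contradiction $0 = u\cdot v = e_0$ completes the proof.
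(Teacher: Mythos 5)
Your setup is sound as far as it goes: continuous maps do send $\theta$-accumulation points of a filter to $\theta$-accumulation points of the image filter, so your annihilator property and the conclusion $u\cdot v=0$ are both correct. But the proof has a genuine gap exactly where you flag it, and it is not a technicality that ``inductive diagonalisation'' will repair. Knowing that $u$ is \emph{some} $\theta$-accumulation point of $\mathcal{F}_c$ and $v$ is \emph{some} $\theta$-accumulation point of $\mathcal{F}_r$ gives no control over the index sets $\{k: c_k\in\overline{U_1}\}$ and $\{k: r_k\in\overline{U_2}\}$ beyond their infinitude; they may well be disjoint. Refining the filters to force an overlap and reapplying weak H-closedness produces possibly \emph{new} accumulation points $u'$, $v'$ rather than the fixed pair $(u,v)$, and weak H-closedness is a hypothesis on $T$, not on $T\times T$, so there is no ambient compactness to apply to the diagonal filter itself. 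Without some uniformity statement, the intended contradiction $0=u\cdot v=e_0$ cannot be reached.

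The paper closes exactly this hole by proving a \emph{cofiniteness} statement instead of a mere existence statement. It first shows (using that $E(\mathcal{B}_{\omega})$ is $\theta$-closed in $S$, a cited result on chain-finite semilattices) that $0$ is the only possible $\theta$-accumulation point of any $\omega$-filter on the idempotents, whence for every neighborhood $U$ of $0$ the set $\{n:(n,n)\notin\overline{U}\}$ is finite. Bootstrapping from this, it shows that $0$ is a $\theta$-accumulation point of \emph{every} $\omega$-filter on the row $\{(1,n)\}$ and on the column $\{(n,1)\}$, which converts into: all but finitely many $(1,n)$ and all but finitely many $(n,1)$ lie in $\overline{V}$ for any neighborhood $V$ of $0$. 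This cofiniteness is what guarantees a \emph{common} index $n$ with $(1,n),(n,1)\in\overline{V}$, and then $(1,1)=(1,n)\cdot(n,1)\in\overline{V}\cdot\overline{V}\subset\overline{U}$ contradicts the choice of a neighborhood $U$ of $0$ with $(1,1)\notin\overline{U}$. If you want to salvage your scheme, you should aim at an analogous cofiniteness for your $c_k$ and $r_k$ relative to neighborhoods of $0$, rather than trying to make one fixed pair $(u,v)$ accumulate along the diagonal filter.
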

\begin{proof}
Suppose to the contrary that $\mathcal{B}_{\w}$ is a subsemigroup of a weakly H-closed topological semigroup $S$. By $E(\mathcal{B}_{\w})$ we denote the semilattice of idempotents of $\mathcal{B}_{\w}$. Observe that $E(\mathcal{B}_{\w})=\{(n,n)\mid n\in\w\}$ and all maximal chains of  $E(\mathcal{B}_{\w})$ contain two elements. Then, by~\cite[Theorem~2.1]{BBm}, the set $E(\mathcal{B}_{\w})$ is $\theta$-closed in $S$.
Let $\mathcal{F}$ be an arbitrary $\w$-filter on the set $\{(n,n)\mid n\in\w\}$.
Since $S$ is weakly H-closed then there exists a $\theta$-accumulation point $s\in S$ of the filter $\mathcal{F}$. Since the set $E(\mathcal{B}_{\w})$ is $\theta$-closed in $S$ we obtain that $s\in E(\mathcal{B}_{\w})$. We show that $s=0$. It is sufficient to prove that $s\cdot s=0$. Fix an arbitrary open neighborhood $W$ of $s\cdot s$.
Since $S$ is a topological semigroup
there exists an open neighborhood $V_s$ of $s$ such that
$\overline{V_s}\cdot \overline{V_s}\subset \overline{W}$. Since $s$ is a $\theta$-accumulation point of the filter $\mathcal{F}$ there exists infinitely many $n\in \omega$ such that $(n,n)\in \overline{V_s}$. Fix two distinct elements $(n_1,n_1)\in \overline{V_s}$ and $(n_2,n_2)\in \overline{V_s}$. The definition of the semigroup operation in $\mathcal{B}_{\w}$ implies that $0=(n_1,n_1)\cdot (n_2,n_2)\in \overline{V_s}\cdot \overline{V_s}\subset \overline{W}$. Hence $0\in \overline{W}$ for each open neighborhood $W$ of $s\cdot s$ which implies that $s\cdot s=0$.
Hence for each $\w$-filter $\mathcal{F}$ on the set $\{(n,n)\mid n\in\w\}$ the only $\theta$-accumulation point of $\mathcal{F}$ is
$0$. Thus for each open neighborhood $U$ of $0$ the set $A_U=\{(n\mid (n,n)\notin \overline{U}\}$ is
finite, because if there exists an open neighborhood $U$ of $0$ such that the set $A_U$ is infinite, then $0$ is not a $\theta$-accumulation point of the $\w$-filter $\mathcal{F}$ which has a base consisting of cofinite subsets of $A_U$.

Let $\mathcal{F}$ be an arbitrary $\w$-filter on the set $\{(1,n)\mid n\in\w\}$.
Since $S$ is weakly H-closed there exists a $\theta$-accumulation point $s\in S$ of the
filter $\mathcal{F}$.

We claim that $s\cdot 0=0$. Indeed, fix any open neighborhood $W$ of $s\cdot 0$. The continuity of the semigroup operation in $S$ yields open neighborhoods $V_s$ of $s$ and $V_0$ of $0$ such that $\overline{V_s}\cdot \overline{V_0}\subset \overline{W}$. Since the set $A_{V_0}=\{(n\mid (n,n)\notin \overline{V_0}\}$ is
finite and $s$ is a $\theta$-accumulation point of the filter $\mathcal{F}$ there exist distinct $n,m\in\omega$ such that $(1,n)\in \overline{V_s}$ and $(m,m)\in \overline{V_0}$. Then $0=(1,n)\cdot (m,m)\in \overline{V_s}\cdot \overline{V_0}\subset \overline{W}$. Hence $0\in\overline{W}$ for each open neighborhood $W$ of $s\cdot 0$ witnessing that $s\cdot 0=0$.

Fix an arbitrary open neighborhood $U$ of $0$. Since $s\cdot 0=0$
and $S$ is a topological semigroup, there exist
open neighborhoods $V_s$ of $s$ and $V_0$ of $0$ such that $\overline{V_s}\cdot \overline{V_0}\subset \overline{U}$. Recall that the set $\{n\mid (n,n)\notin \overline{V_0}\}$ is finite. Then $(1,n)=(1,n)\cdot (n,n)\in \overline{V_s}\cdot \overline{V_0}\subset \overline{U}$ for all but finitely many elements $(1,n)\in \overline{V_s}$. Hence $0$ is a $\theta$-accumulation point of the $\w$-filter $\mathcal{F}$. Since the filter $\mathcal{F}$ was selected arbitrarily, $0$ is a $\theta$-accumulation point of any $\w$-filter on the set $\{(1,n)\mid n\in\w\}$. As a consequence, for each open neighborhood $U$ of $0$ the set $B_U=\{n\mid (1,n)\notin \overline{U}\}$ is finite.

Similarly it can be shown that for each open neighborhood $U$ of $0$ the set $C_U=\{n\mid (n,1)\notin \overline{U}\}$ is finite.

Fix an open neighborhood $U$ of $0$ such that $(1,1)\notin \overline{U}$.
Since $0=0\cdot 0$ the continuity of the semigroup operation implies that there exists an open neighborhood $V$ of $0$ such that $\overline{V}\cdot \overline{V}\subset \overline{U}$. The finiteness of the sets $B_V$ and $C_V$ implies that there exists $n\in \w$ such that $\{(1,n),(n,1)\}\subset \overline{V}$. Hence $(1,1)=(1,n)\cdot (n,1)\in \overline{V}\cdot \overline{V}\subset \overline{U}$,
which contradicts to the choice of $U$.
\end{proof}

\begin{corollary}\label{matr1}
The semigroup of $\w{\times}\w$-matrix units does not embed into a topological semigroup $S$ which is an H-closed topological space.
\end{corollary}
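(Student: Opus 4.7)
The plan is extremely short, since this corollary is essentially a specialization of the preceding theorem. The paper has already observed (in the discussion introducing weakly H-closed spaces just before Theorem~\ref{matr}) that every H-closed topological space is weakly H-closed: indeed, an H-closed space assigns a $\theta$-accumulation point to every filter, and an $\w$-filter is in particular a filter, so it inherits a $\theta$-accumulation point. Thus the class of topological semigroups whose underlying space is H-closed is contained in the class of topological semigroups whose underlying space is weakly H-closed.

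Consequently, I would simply invoke Theorem~\ref{matr}: if $\mathcal{B}_{\w}$ embedded into a topological semigroup $S$ whose underlying space is H-closed, then $S$ would be a weakly H-closed topological semigroup containing a copy of $\mathcal{B}_{\w}$, directly contradicting Theorem~\ref{matr}. So the proof is a single implication followed by the citation of the prior theorem.

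There is no real obstacle here; the only point worth flagging is to make explicit the implication ``H-closed $\Rightarrow$ weakly H-closed,'' which the paper has already recorded but which is the sole substantive content of the corollary's proof. No new construction, no new semigroup-theoretic argument, and no delicate topology is needed beyond what is used in the proof of Theorem~\ref{matr}.
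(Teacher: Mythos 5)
Your proposal is correct and matches the paper's intent exactly: the corollary is deduced from Theorem~\ref{matr} via the implication ``H-closed $\Rightarrow$ weakly H-closed,'' which the paper records explicitly in the paragraph introducing weakly H-closed spaces. Nothing further is needed.
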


However, we have the following questions:

\begin{question}
Does there exist a feebly compact topological semigroup $S$ which contains a semigroup of $\w{\times}\w$-matrix units?
\end{question}

\begin{question}~\label{q}
Does there exist a topological semigroup $S$ which cannot be embedded into a feebly compact topological semigroup $T$?
\end{question}


We remark that these questions were posed at the Lviv Topological Algebra Seminar a few years ago. 




\end{document}